\newtheorem{theorem}{Theorem}
\newtheorem*{lemma}{Lemma}
\newtheorem{lem}{Lemma}
\theoremstyle{definition}
\theoremstyle{remark}
\newcommand{\C}{\mathbb{C}}
\renewcommand{\H}{\mathbb{H}}
\newcommand{\R}{\mathbb{R}}
\newcommand{\Z}{\mathbb{Z}}
\renewcommand{\L}{\Lambda}
\begin{document}

\title[An Extremal Property of the Hexagonal Lattice]{An Extremal Property of the Hexagonal Lattice}

\keywords{Hexagonal lattice, calculus of variations, lattice sums}
\subjclass[2010]{52C05, 74G65.}

\author[M.~Faulhuber]{Markus Faulhuber}
\address[M.F.]{Faculty of Mathematics, University of Vienna, 1090 Vienna, Austria}
\email{markus.faulhuber@univie.ac.at}

\author[S.~Steinerberger]{Stefan Steinerberger}
\address[S.S.]{Department of Mathematics, Yale University, New Haven, CT 06511, USA}
\email{stefan.steinerberger@yale.edu}

\thanks{M.F.~is supported by the Erwin Schr\"odinger Program of the Austrian Science Fund (FWF): J4100-N32. S.S.~is supported by the NSF (DMS-1763179) and the Alfred P.~Sloan Foundation.}

\begin{abstract} 
We describe an extremal property of the hexagonal lattice $\Lambda \subset \mathbb{R}^2$. Let $p$ denote the circumcenter of its fundamental triangle (a so-called deep hole) and let $A_r$ denote the set of lattice points that are at distance $r$ from $p$
\begin{equation}
	A_r = \left\{ \lambda \in \Lambda: \| \lambda - p \| = r\right\}.
\end{equation}
If $\Gamma$ is a small perturbation of $\Lambda$ in the space of lattices with fixed density and $C_r$ denotes the set of points in $A_r$ shifted to the new lattice, then
\begin{equation}
	\sum_{\mu \in C_r}{ \| p - \mu\|} - \sum_{\lambda \in A_r}{ \| p - \lambda\|} \gtrsim r \, |A_r| \, d(\Lambda, \Gamma)^2,
\end{equation}
where $d(\Lambda, \Gamma)$ denotes the distance between the lattices: the hexagonal lattice has the property that ``far away points are closer than they are for nearby lattices". 
This has implications in the calculus of variations: assume
\begin{equation}
	g_\Gamma(z) = \sum_{\gamma \in \Gamma} f( \|z - \gamma \|) \quad \textnormal{ satisfies } \quad \min_{z \in \R^2} g_\L(z) = g_\L(p).
\end{equation}
For a certain class of compactly supported functions $f$, the hexagonal lattice $\Lambda$ is then a strict local maximizer of
\begin{equation}
	\max_{\Gamma} \min_{z \in \R^2} \sum_{\gamma \in \Gamma}{f( \|z - \gamma\| )},
\end{equation}
where the maximum runs over all lattices of fixed density. 
\end{abstract}

\maketitle
\vspace{-8pt}
\section{Introduction and Results}
\subsection{Introduction.}
The purpose of this paper is to discuss an extremal property of the hexagonal lattice (sometimes called triangular lattice). The hexagonal lattice appears naturally, sometimes also unexpectedly, as a fundamental object in a wide variety of mathematical problems. In general, the variety of problems, where (expected) solutions involve special lattice configurations, includes e.g. geometric function theory \cite{Bae94}, energy minimization problems \cite{bet, bl, cohn, radin, theil}, packing problems \cite{Via24, con, hales, rogers, steini, Via8}, varying metrics on manifolds \cite{Osgood88} or optimal sampling strategies \cite{strohmer}; we aim to describe a new property of the hexagonal lattice and then explain how that property can be used in the calculus of variations. The novelty in this paper is that we study the behavior of a class of lattice-periodic functions evaluated at deep holes (the circumcenter of a fundamental triangle) of a lattice and not at lattice points.\\

Throughout this work, we use $\Gamma$ to denote a general lattice in $\R^2$ and by $\Lambda$ we denote the standard hexagonal lattice in $\R^2$, normalized to density 1. In general, the area of the fundamental cell of the lattice is called the volume of the lattice and its inverse is called the density. A lattice $\Gamma$ of density 1, can be represented by a matrix $S \in \mbox{SL}(2, \R)$. The matrix $S$ has column vectors $v_1$ and $v_2$, i.e.~$S = (v_1, v_2)$, and the lattice $\Gamma$ is generated by taking integer linear combinations of the vectors;
\begin{equation}
	\Gamma = S \Z^2 = \left\{k v_1 + lv_2 \, \Big| (k,l) \in \Z^2 \right\}.
\end{equation}
Throughout this work, we will write the vectors as row vectors, but they should always be seen as column vectors. Also, we denote the generating vectors of the hexagonal lattice by
\begin{equation}
	v = \frac{\sqrt{2}}{3^{1/4}} \, (1, \, 0)
	\qquad \textnormal{ and } \qquad
	w = \frac{1}{3^{1/4} \sqrt{2}} \, (1,\, \sqrt{3}).
\end{equation}
The circumcenter $p$ of the fundamental triangle of the hexagonal lattice is
\begin{equation}
	p = \frac{1}{3} \, \left( v + w \right) = \frac{1}{3^{1/4} \sqrt{2}} \,
	\left( 1, \, \frac{1}{\sqrt{3}} \right).
\end{equation}

\begin{figure}[ht]
	\includegraphics[width=.7\textwidth]{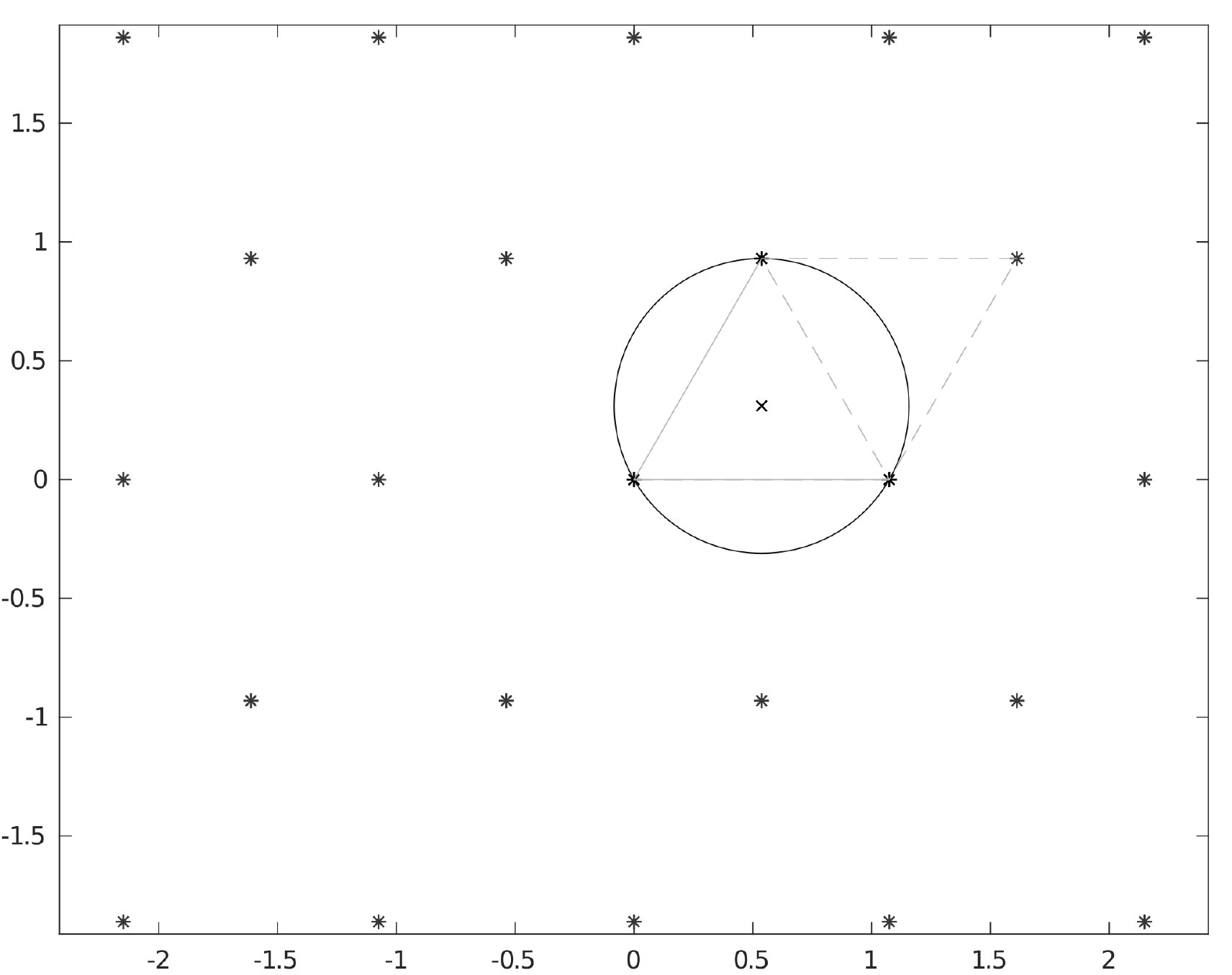}
	\caption{The hexagonal lattice of density 1. The solid lines are the generating vectors $v$ and $w$. The fundamental cell (solid and dashed lines) is the parallelogram spanned by $v$ and $w$, its area equals 1. The $\times$ marks the deep hole $p$.}
\end{figure}

\subsection{The Result.}\label{sec_results}
Let $\Lambda$ be the hexagonal lattice and let $p$ denote a deep hole. By
\begin{equation}
	A_r = \left\{ \lambda \in \Lambda: \| \lambda - p \| = r \right\}
\end{equation}
we denote the set of lattice points at distance $r$ from $p$. This set is always finite but may naturally be empty (depending on $r$).
Our main statement can be summarized as saying that the hexagonal lattice is extremal in the sense that the sum of distances from $A_r$ to the circumcenter of the fundamental cell ($=(\#A_r)r$) is as small as it could possibly be; for any nearby lattice in the space of lattices of the same density, the sum of the distances of the slightly perturbed points to $p$ is larger. We will now make this precise and start by identifying the set of $A_r$ as coordinates with respect to the standard lattice basis vectors $v$ and $w$, which gives rise to the set
\begin{equation}
	B_r = \left\{(k,l) \in \mathbb{Z}^2: \| kv  + lw - p\| = r\right\}.
\end{equation}
If we now slightly perturb the lattice $\Lambda$ in the space of lattices of density 1 (this corresponds to replacing the basis vectors $v$ and $w$ by nearby vectors $v'$ and $w'$ giving rise to a new lattice $\Gamma$), then the new set
\begin{equation}
	C_r = \left\{ k v' + l w': (k,l) \in B_r \right\} \subset \Gamma
\end{equation}
can be understood as a local perturbation of the set $A_r$.
\begin{figure}[ht]
	\includegraphics[width=.7\textwidth]{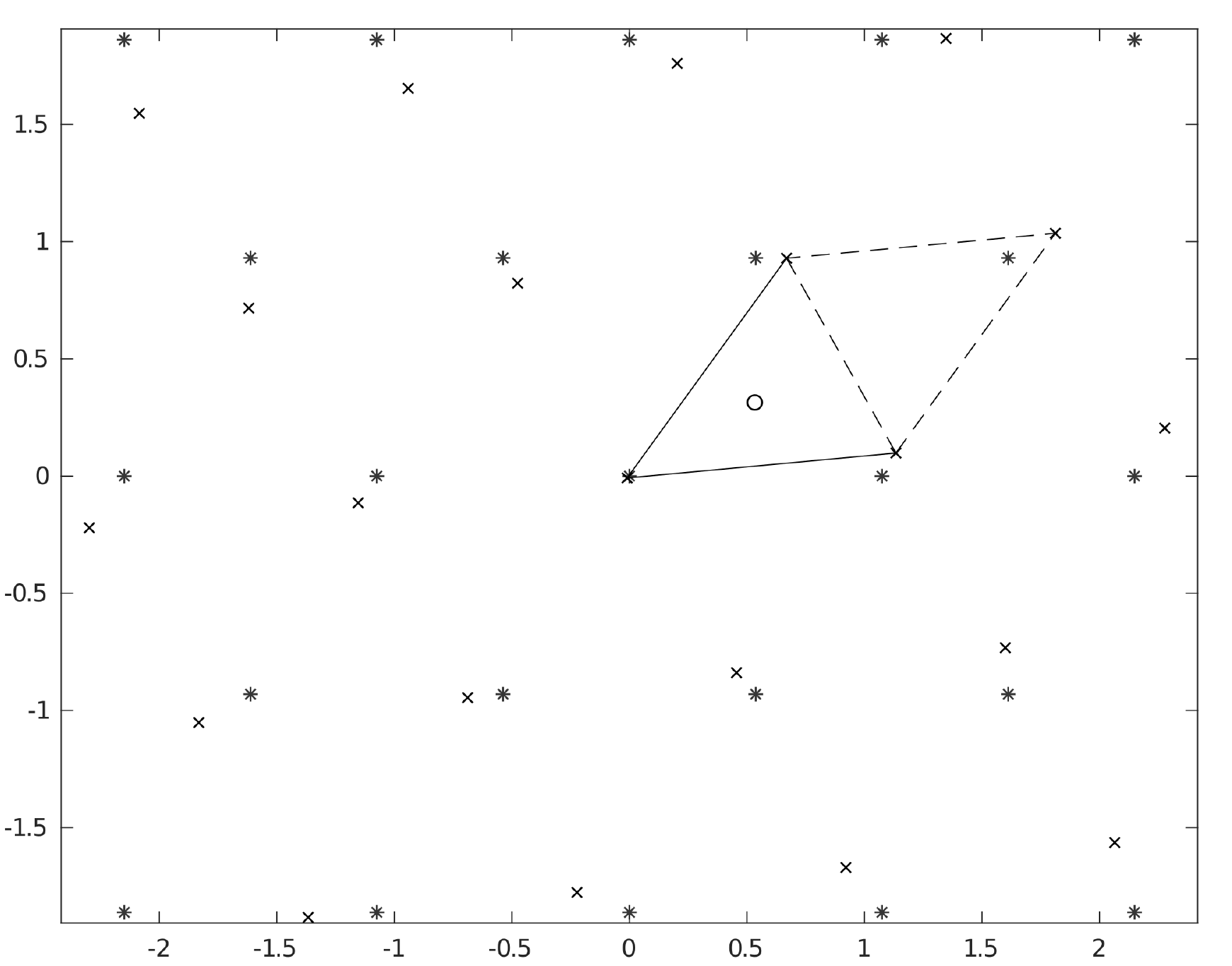}
	\caption{The hexagonal lattice and a perturbed version in comparison. The fundamental cell and a deep hole of the perturbed lattice are shown.}
\end{figure}

We introduce a distance $d(\Gamma_1, \Gamma_2)$ on lattices in the canonical way (say, as the distance in $\mathbb{R}^4$ of the concatenated basis vectors); since it is a finite-dimensional space and we do not specify universal constants, it is not important which distance we choose. More details on the space of lattices are given in $\S$ \ref{sec_latt}.  We can now state our main theorem.

\begin{theorem}\label{thm_distance}
	If $\Gamma$ is sufficiently close $\Lambda$ (depending on $r$), then
	\begin{equation}\label{eq_dist2}
		\sum_{\mu \in C_r} \| p - \mu\| -  \sum_{\lambda \in A_r} \| p - \lambda\| \gtrsim r \, |A_r| \, d(\Lambda, \Gamma)^2.
	\end{equation}
	If $\phi:\mathbb{R}_+ \rightarrow \mathbb{R}$ is any monotonically increasing, convex function, then
	\begin{equation}\label{eq_dist3}
		\sum_{\mu \in C_r} \phi(\| p - \mu\|) -  \sum_{\lambda \in A_r} \phi(\| p - \lambda\|) \gtrsim  r \, \phi'(r) \, |A_r| \, d(\Lambda, \Gamma)^2.
	\end{equation}	
\end{theorem}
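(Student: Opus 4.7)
The plan is a second-order Taylor expansion at each point of $A_r$, exploiting the $D_3$-symmetry of $\Lambda$ at $p$ to make the first-order term vanish on the density-preserving tangent space and the second-order term yield a quantitative positive lower bound.

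Parametrize nearby lattices by $v' = v + \eta_1$, $w' = w + \eta_2$ subject to the exact constraint $\det(v', w') = 1$. With $u_{k,l} := \lambda_{k,l} - p$ and $\delta_{k,l} := k\eta_1 + l\eta_2$, a direct expansion yields
\begin{equation*}
\|p - \mu_{k,l}\| \,=\, r \,+\, \tfrac{1}{r}\langle u_{k,l}, \delta_{k,l}\rangle \,+\, \tfrac{1}{2r}\|\delta_{k,l}^{\perp}\|^2 \,+\, O(\epsilon^3), \qquad \epsilon := d(\Lambda, \Gamma),
\end{equation*}
where $\delta^\perp$ denotes the component perpendicular to $u$. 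The proof then has two essential parts.

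\emph{Vanishing of the linear sum.} Summing over $B_r$ produces $\tfrac{1}{r}(\langle V, \eta_1\rangle + \langle W, \eta_2\rangle)$ with $V := \sum_{B_r} k\,u_{k,l}$ and $W := \sum_{B_r} l\,u_{k,l}$. The $2\pi/3$ rotation around $p$ and the three reflections through $p$ and the vertices of the fundamental triangle preserve $\Lambda$, inducing an affine $D_3$-action stabilizing $A_r$ and $B_r$. Applying the rotation relation gives $W = \rho V$ for $\rho$ the $2\pi/3$ rotation; applying one reflection forces $V \parallel v - p$; and a direct computation from the explicit coordinates of $p, v, w$ shows $v - p \parallel w^\perp$. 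Combining these, $(V, W) = c(w^\perp, -v^\perp)$ for a scalar $c = c(r)$, which is exactly the Euclidean gradient of the constraint $(v', w') \mapsto \det(v', w')$. Hence the linear piece equals $\tfrac{c}{r}\bigl(\det(\eta_1, w) + \det(v, \eta_2)\bigr)$, and the exact identity $\det(\eta_1, w) + \det(v, \eta_2) = -\det(\eta_1, \eta_2)$ converts it into the purely quadratic quantity $-\tfrac{c}{r}\det(\eta_1, \eta_2)$.

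\emph{Positivity of the quadratic form.} The expansion now reads
\begin{equation*}
S := \sum_{\mu\in C_r}\|p - \mu\| - r|A_r| \,=\, \tfrac{1}{2r}\,Q(\eta_1, \eta_2) + O(\epsilon^3), \qquad Q \,:=\, \sum_{(k,l)\in B_r} \|\delta_{k,l}^\perp\|^2 \,-\, 2c\det(\eta_1, \eta_2),
\end{equation*}
and the task is to show $Q \gtrsim r^2 |A_r|\,\|(\eta_1, \eta_2)\|^2$ on the 3-dimensional tangent space of the constraint. The induced $D_3$-action on this tangent space preserves $Q$, so $Q$ is diagonalized by the decomposition into $D_3$-isotypes (a 1-dimensional trivial component and a 2-dimensional standard component); each eigenvalue is then an explicit $D_3$-invariant orbit-sum over $B_r$, and since $|(k,l)| \asymp r$ for $(k,l)\in B_r$, summing over the $\asymp |A_r|$ orbits delivers the claimed size. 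Crucially, the $\det(\eta_1, \eta_2)$-term supplies positivity in precisely the infinitesimal-rotation-around-$p$ direction, in which the perpendicular-component sum $\sum\|\delta^\perp\|^2$ collapses. Finally, \eqref{eq_dist3} is immediate from \eqref{eq_dist2} by the tangent-line bound $\phi(r + t) \geq \phi(r) + \phi'(r)t$ for convex $\phi$, which gives $\sum \phi(\|p-\mu\|) - \sum\phi(r) \geq \phi'(r)\bigl(\sum\|p-\mu\| - r|A_r|\bigr)$.

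The main obstacle will be the second part: verifying a strict positive lower bound for $Q$ on each $D_3$-isotype of the tangent space, and in particular checking that in the rotation-around-$p$ direction the $\det$-correction has exactly the right sign and magnitude to overcome the degeneracy of the perpendicular-component sum.
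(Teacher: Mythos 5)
There is a genuine gap, and it sits exactly where you predicted the main obstacle would be: the coercivity claim $Q \gtrsim r^2\,|A_r|\,\|(\eta_1,\eta_2)\|^2$ is \emph{false} on the full $3$-dimensional tangent space of the constraint $\det(v',w')=1$. The offending direction is the infinitesimal rotation about the origin, $\eta_1=\theta Jv$, $\eta_2=\theta Jw$ with $J$ the rotation generator. For that perturbation $\Gamma=R_\theta\Lambda$, one has $\sum_{\mu\in C_r}\|p-\mu\|=\sum_{\lambda\in A_r}\|R_{-\theta}p-\lambda\|$, i.e.\ the lattice is unchanged up to rotation and only the reference point moves by $\|R_{-\theta}p-p\|\asymp\theta\|p\|=O(\theta)$; since $q\mapsto\sum_{\lambda\in A_r}\|q-\lambda\|$ has Hessian $\tfrac{|A_r|}{2r}I$ at the circumcenter, the increase is $\asymp |A_r|\,\theta^2/r$, not $\gtrsim r\,|A_r|\,\theta^2$. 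So in your notation $Q\asymp|A_r|\,\theta^2$ rather than $r^2|A_r|\,\theta^2$: the two terms of $Q$ do not combine favorably there but rather both have size $\asymp r^2|A_r|\theta^2$ and nearly cancel (note also that your description of this direction is backwards: $\sum\|\delta^\perp\|^2$ does not collapse for a rotation about the origin, since $\delta_{k,l}=\theta J\lambda_{k,l}$ has perpendicular component of size $\asymp\theta r$; a rotation about $p$ is affine and is not in the tangent space at all). Consequently the inequality you are proving, read with $d(\Lambda,\Gamma)$ as the raw $\mathbb{R}^4$-distance of bases, fails for large $r$. The theorem must be (and in the paper is) understood modulo rotations: the paper works in the $2$-parameter family $v_1=y^{-1/2}(1,0)$, $w_1=y^{-1/2}(x,y)$, which pins the first basis vector to the positive $x$-axis and excludes precisely this direction. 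Your argument is repaired by restricting to that $2$-dimensional slice (or quotienting your tangent space by the rotation generator), after which the remaining two directions do give eigenvalues of size $\asymp r$ per point, matching the paper's Hessian computation. As written, though, the central quantitative claim of your second part cannot be ``verified'' because it is not true.

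Setting that aside, your architecture is a legitimate variant of the paper's: where the paper groups $A_r$ into $2\pi/3$-rotation triples, computes $\nabla f=0$ and the $2\times 2$ Hessian in $(x,y)$ explicitly, and bounds its smallest eigenvalue below by $\gtrsim\sqrt{k^2+l^2}\asymp r$, you derive the vanishing of the linear term from the $D_3$-symmetry plus the determinant identity and propose a representation-theoretic diagonalization of the quadratic form; the latter still needs the explicit orbit-sum eigenvalue bounds to be carried out. Your deduction of \eqref{eq_dist3} from \eqref{eq_dist2} via the tangent-line inequality $\phi(r+t)\geq\phi(r)+\phi'(r)t$ is correct and in fact cleaner than the paper's Taylor-expansion argument, since it needs no second derivative of $\phi$ and no error-term bookkeeping.
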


Somewhat figuratively put, this means that the hexagonal lattice is one where the circumcenter of the fundamental cell is closer to the points at a certain distance than the furthest point is for any lattice (with the same density) nearby. It could be of interest to understand which lattices in higher dimensions have such a property. One would expect that this is at least loosely connected to sphere packing, which nominates the face centered cubic packing, the hexagonal close packing, $E_8$ and the Leech lattice $\Lambda_{24}$ as natural candidates for further research. We remark that our results do not only hold for density 1 as long as perturbations are done in a way that preserves the density.

\subsection{Calculus of Variations.}
Theorem \ref{thm_distance} has an immediate application in the calculus of variations. Let $\Gamma$ be a lattice in $\mathbb{R}^2$. A classical problem that appears in a variety of settings is to study the function $g_\Gamma:\mathbb{R}^2 \rightarrow \mathbb{R}_+$ where
\begin{equation}
	g_\Gamma (z) = \sum_{\gamma  \in \Gamma}{f(\|z - \gamma\|)},
\end{equation}
with $f:\R_+ \rightarrow \mathbb{R}_+$ satisfying appropriate decay conditions, and ask, for example, for which lattice of fixed density this function $g$ is as `flat' as possible (i.e.~having the smallest global maximum and the largest global minimum). Problems in this direction have been considered e.g. in \cite{Bae94, Bae97, bet, bet1, bet2, bl, markus1, markus2, mont, radin, strohmer}. These problems tend to be fairly difficult: even the special case of $f$ being a Gaussian is not yet fully understood (see \cite{BaeVin98, phd}).\\

We show that, at least for a certain class of functions, the hexagonal lattice has locally (in the space of lattices) the largest minimum. We call a function $f:\R_+ \to \R_+$ admissible if:
\begin{enumerate}[(i)]
\item it is compactly supported
\item it is monotonically decreasing on $(0.5, \infty)$, and
\item it satisfies
	\begin{equation}
		r \, f''(r) \leq -c \, f'(r),
	\end{equation}
	for some universal $c > 0$ in a neighborhood of $\left\{\| \lambda - p\|: \lambda \in \Lambda \right\} \subset \mathbb{R}_+$.\label{cond_3}
\end{enumerate}

Condition (i) combined with condition \eqref{cond_3} shows that such an $f$ must necessarily be discontinuous. The constant $0.5$ in the definition of the domain could be replaced by any $0< R < \sqrt{2}/3^{3/4} \approx 0.62 \ldots$ (the smallest distance between a lattice point of $\Lambda$ and the circumcenter $p$).
\begin{theorem}\label{thm_max}
	Suppose $f$ is admissible and suppose that
	\begin{equation}
		\min_{z \in \R^2} \sum_{\lambda  \in \L} f(\|z - \lambda \|) = \sum_{\lambda \in \L}{f(\|p - \lambda \|)},
	\end{equation}
	then the hexagonal lattice $\Lambda$ is a strict local maximizer of
	\begin{equation}
		\max_{\Gamma} \min_{z \in \R^2} \sum_{\gamma \in \Gamma}{f( \|z - \gamma\| )},
	\end{equation}
	where the maximum runs over lattices $\Gamma$ having density 1.
\end{theorem}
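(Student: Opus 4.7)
The plan is to reduce the proof to comparing $g_\Gamma(p)$ with $g_\Lambda(p)$ at the single common point $z = p$: since $\min_z g_\Gamma(z) \le g_\Gamma(p)$ trivially, while $\min_z g_\Lambda(z) = g_\Lambda(p)$ by hypothesis, it suffices to prove $g_\Gamma(p) < g_\Lambda(p)$ strictly for every $\Gamma \ne \Lambda$ sufficiently close to $\Lambda$. To implement this, I will use the basis perturbation $kv+lw \mapsto kv'+lw'$ to identify $\Gamma$ with $\Lambda$, partition both lattices into the distance sets $A_r$, and exploit the compact support of $f$ to ensure only finitely many $r$ contribute. This gives
\begin{equation}
g_\Gamma(p) - g_\Lambda(p) = \sum_r \left( \sum_{\mu \in C_r} f(\|p - \mu\|) - \sum_{\lambda \in A_r} f(\|p - \lambda\|) \right),
\end{equation}
and the task becomes making each term in this finite sum strictly negative.

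Next I will Taylor-expand $f$ to second order at each relevant $r$; condition (iii) supplies the $C^2$ regularity in a neighborhood of every $r = \|\lambda - p\|$, and for $\Gamma$ close to $\Lambda$ each $\|p-\mu\|$ stays in that neighborhood. Setting $h_r = \sum_{\mu \in C_r} \|p-\mu\| - |A_r|\,r$ and $S_r = \sum_{\mu \in C_r} (\|p-\mu\|-r)^2$, this yields
\begin{equation}
\sum_{\mu \in C_r} f(\|p-\mu\|) - \sum_{\lambda \in A_r} f(\|p-\lambda\|) = f'(r)\, h_r + \tfrac{1}{2} f''(r)\, S_r + O\bigl(|A_r|\, d(\Lambda, \Gamma)^3\bigr).
\end{equation}
By monotonicity (ii), $f'(r) < 0$ on the relevant range, and Theorem \ref{thm_distance} gives $h_r \gtrsim r\, |A_r|\, d(\Lambda, \Gamma)^2$, so the first-order term is bounded above by $-c_1 |f'(r)|\, r\, |A_r|\, d(\Lambda, \Gamma)^2$ for some universal $c_1 > 0$.

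For the second-order term, the trivial estimate $S_r \le K |A_r|\, d(\Lambda,\Gamma)^2$ for some absolute $K$ (each summand is $O(d(\Lambda,\Gamma)^2)$) handles the magnitude; if $f''(r) \le 0$ the term already has the favourable sign, and otherwise condition (iii), $f''(r) \le c\, |f'(r)|/r$, turns it into a small multiple of the first-order term. Combining,
\begin{equation}
f'(r)\, h_r + \tfrac{1}{2} f''(r)\, S_r \;\le\; |f'(r)|\, |A_r|\, d(\Lambda, \Gamma)^2 \left( -c_1 r + \frac{cK}{2r} \right).
\end{equation}
Since the set of relevant $r$ is finite with a positive lower bound, the admissibility constant $c$ being small enough makes each bracket strictly negative; summing over $r$ together with the cubic error gives $g_\Gamma(p) - g_\Lambda(p) < 0$ for every $\Gamma \ne \Lambda$ sufficiently close to $\Lambda$.

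The hard part will be the coincidence of scales between the first- and second-order Taylor contributions: both are of order $d(\Lambda, \Gamma)^2$, the first because the hexagonal lattice is a critical configuration so the linear-in-perturbation part of $h_r$ vanishes and only the quadratic remainder survives, the second by its intrinsic structure. Condition (iii) is the precise balance that forces the definitely negative first-order Taylor term to dominate any positive second-order contribution; without such a bound on $f''$ relative to $f'$, a sufficiently convex $f$ could easily reverse the sign. Verifying this competition, and checking that the cubic remainder is genuinely lower order for $\Gamma$ close enough to $\Lambda$, will be the technical heart of the argument.
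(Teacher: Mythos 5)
Your proposal is correct and follows essentially the same route as the paper: reduce to showing $g_\Gamma(p) < g_\Lambda(p)$, decompose into the finitely many distance shells $A_r$ permitted by the compact support, Taylor expand to second order, use the coercivity of Theorem \ref{thm_distance} to make the first-order term strictly negative of size $|f'(r)|\, r\, |A_r|\, d(\Lambda,\Gamma)^2$, and invoke condition \eqref{cond_3} so that it dominates the second-order term. The only slip is that your ``absolute'' constant $K$ in $S_r \le K|A_r|\,d(\Lambda,\Gamma)^2$ should really carry a factor $r^2$ (since $|\varepsilon_\mu| \lesssim r\, d(\Lambda,\Gamma)$, as the paper uses); correcting this makes the bracket $-c_1 r + cK'r/2$ and the resulting threshold on $c$ genuinely universal, which if anything cleans up your final step.
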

We do not believe that Theorem \ref{thm_max} is necessarily the best kind of result that can be obtained from Theorem \ref{thm_distance} and we believe this to be an interesting avenue for further research. As usual, it is natural to assume that the hexagonal lattice is actually a global and not only a local optimum for many of these problems.

\section{Proofs}

The proof naturally decouples: we first establish a simple reflection symmetry of the hexagonal lattice ($\S$ \ref{sec_lemma}). This symmetry allows us to write the sets $A_r$ as a union of triples of points. The main idea behind the proof of Theorem \ref{thm_distance} is to establish coercivity of the infinitesimal action on triples of points, Theorem \ref{thm_distance} then follows by summation. Theorem \ref{thm_max} follows from the proof of Theorem \ref{thm_distance} together with some basic observations.

\subsection{The Space of Lattices}\label{sec_latt} Before embarking on the proofs, we briefly describe the natural domain for the space of complex lattices which also motivates our parametrization of lattices throughout the rest of the paper. The identification for lattices in $\R^2$ follows from the natural identification of $\C$ with $\R^2$
\begin{equation}
	z = x + i y \in \C \qquad \longleftrightarrow \qquad (x,y) \in \R^2.
\end{equation}

First, we will show how a lattice can be, up to rotation, identified with a point in the complex upper half plane $\H = \{ \tau \in \Z \mid \Im(\tau) > 0\}$. A complex lattice is the integer span of two complex numbers $z_1$ and $z_2$ with the property that $\tfrac{z_1}{z_2} \notin \R$;
\begin{equation}
	\Gamma = \langle z_1, z_2 \rangle_\Z = \{ k z_1 + l z_2 \mid k,l \in \Z, \, z_1, z_2 \in \C, \tfrac{z_1}{z_2} \notin \R \}.
\end{equation}
The area (of the fundamental domain) of the lattice is then given by
\begin{equation}
	\textnormal{area}(\Gamma) = | \Im(\overline{z_1} \, z_2) | = |x_1 y_2 - x_2 y_1|, \qquad z_m = x_m + i y_m, \, m = 1,2.
\end{equation}
We identify lattices which can be obtained from one another by rotation;
\begin{equation}
	\Gamma_1 \equiv \Gamma_2 \qquad \Longleftrightarrow \qquad \Gamma_1 = \zeta \, \Gamma_2, \qquad \zeta \in \C, \, |\zeta| = 1.
\end{equation}

\begin{figure}[ht]
	\includegraphics[width=.7\textwidth]{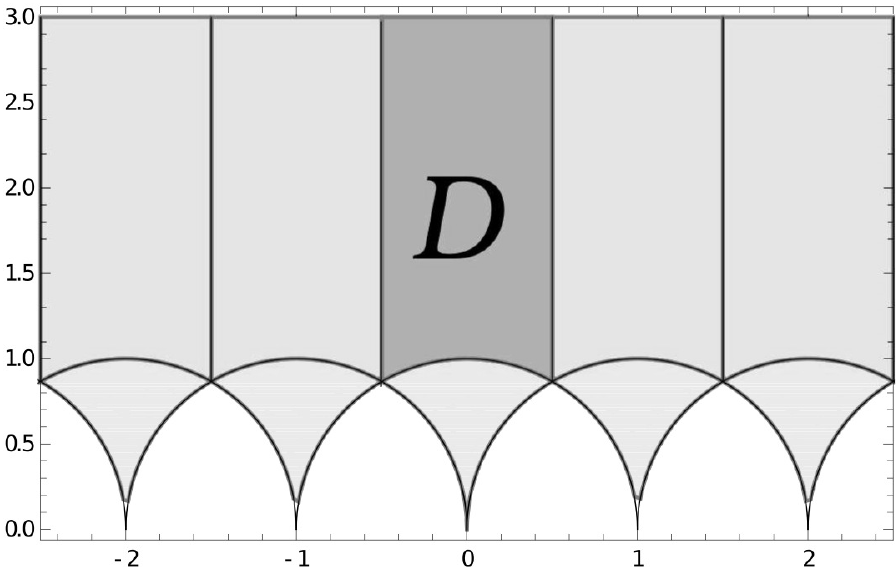}
	\caption{The fundamental domain $D$ and some of its copies derived under the action of the modular group. We note that the picture is fairly incomplete and becomes rather complicated, though structured, close to the real line.}\label{fig_D}
\end{figure}

Therefore, we may assume that $z_1 = 1$. Furthermore, we may assume that $\Im(z_2) > 0$, because if $\Im(z_2) < 0$, then we may simply rotate the lattice such that $z_2 = 1$ and, then, $z_1 = \overline{z_2}$. Hence, a complex number $\tau \in \H$ defines a class of lattices in the complex plane $\C$, which are identified by rotation. By combining all of the previous assumptions, we have for
\begin{equation}
	\Gamma = \langle 1, \tau \rangle, \, \tau \in \H \quad \Longrightarrow \quad \textnormal{area}(\Gamma) = \Im(\tau).
\end{equation}
Hence, to work in the space of lattices with unit area, a point $\tau \in \H$ defines a lattice in the following way;
\begin{equation}
	\Gamma = \Im(\tau)^{-1/2} \langle 1, \tau \rangle.
\end{equation}

As a second step, we are going to show that it is sufficient that $\tau$ is in the so-called fundamental domain of $\H$. The fundamental domain $D$ of $\H$ is given by
\begin{equation}
	D = \{\tau \in \H \mid |\tau| \geq 1, \, |\Re(\tau)| \leq \tfrac{1}{2} \}.
\end{equation}

We need to understand the action of the modular group on $\H$. In general, a matrix $S \in \mbox{SL}(2, \R)$ acts on $\H$ by linear fractional transformations, i.e.~the matrix
\begin{equation}
	S =
	\left(
		\begin{array}{c c}
			a & b\\
			c & d	
		\end{array}
	\right) \in \mbox{SL}(2, \R)
\end{equation}
acts on an element $\tau \in \H$ by the rule
\begin{equation}
	S \circ \tau = \frac{a \tau + b}{c \tau + d}.
\end{equation}
We note that the action of $-I$, where $I$ is the identity matrix, is trivial;
\begin{equation}
	-I \circ \tau = I \circ \tau = \tau.
\end{equation}

Hence, it is actually enough to study the action of the projective special linear group $\mbox{PSL}(2,\R) = \mbox{SL}(2,\R) \slash \{\pm I\}$ on $\H$. This reflects the fact that $\Gamma = -\Gamma$ (after all, $\Gamma$ is an additive group). The discrete subgroup
\begin{equation}
	G = \mbox{SL}(2, \Z) \slash \{\pm I\}
\end{equation}
is called the modular group and it is the image of $\mbox{SL}(2, \Z)$ in $\mbox{PSL}(2,\R)$.
The interpretation of the action of $G$ for the space of lattices in $\R^2$ is the following. The matrix describing a lattice $\Gamma$ is not unique. This is due to the fact that $\Z^2$ (as an additive group) has countably many possible bases. Furthermore, for any $\mathcal{B} \in G$, we have
\begin{equation}
	\mathcal{B} \, \Z^2 = \Z^2,
\end{equation}
which shows that $\mathcal{B}$ is another basis of $\Z^2$. Hence, it follows that if $S$ is a matrix generating a lattice $\Gamma$, then $S \mathcal{B}$ generates the same lattice $\Gamma$.
Therefore, it suffices to focus on lattices generated by $\tau \in D \subset \H$, $\tau = x + i y$. The corresponding lattice in $\R^2$ is generated by the vectors
\begin{equation}
	y^{-1/2} \left(1, \, 0 \right)
	\qquad \textnormal{ and } \qquad
	y^{-1/2} \left(x, \, y \right).
\end{equation}
Any other $\widetilde{\tau} \in \H$ is obtained from $\tau \in D$ by letting the appropriate $\mathcal{B} \in G$ act on it, which corresponds to choosing a different basis for the lattice. For more details on the modular group we refer to \cite[Chap.~7, \S 1]{Serre_1973}.

\subsection{A Lemma.}\label{sec_lemma}
The following lemma states that in the hexagonal lattice $\L$, points at a certain distance from a deep hole naturally occur in triples (see Figure \ref{fig_triples}).
\begin{lemma}\label{lemma}
Let $\Lambda$ denote the standard hexagonal lattice in $\mathbb{R}^2$ spanned by 
\begin{equation}
	v = \left( \frac{\sqrt{2}}{3^{1/4}}, 0 \right) \qquad \mbox{and} \qquad w = \left( \frac{1}{3^{1/4} \sqrt{2}}, \frac{3^{1/4}}{\sqrt{2}}\right).
\end{equation}
By $p = (v+w)/3$ we denote the circumcenter of the fundamental cell and by $R$ the rotation matrix that corresponds to rotation by $2\pi/3$. For any $q \in \mathbb{R}^2$ we have
\begin{equation}
	p+q \in \Lambda \qquad \Longrightarrow \qquad p + Rq \in \Lambda, \; p+R^2q \in \Lambda.
\end{equation}
\end{lemma}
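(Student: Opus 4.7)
\medskip

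The plan is to reduce the claim to two straightforward algebraic facts about the hexagonal lattice: (a) rotation by $2\pi/3$ about the origin maps $\L$ to itself, and (b) the vector $p - Rp$ lies in $\L$. Together these imply that the affine rotation $x \mapsto p + R(x-p)$ (rotation by $2\pi/3$ about the deep hole $p$) maps $\L$ to $\L$, which is exactly the lemma.

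First, I would verify (a) by direct computation with the given basis. Using the explicit matrix
\[
R = \begin{pmatrix} -1/2 & -\sqrt{3}/2 \\ \sqrt{3}/2 & -1/2 \end{pmatrix},
\]
a short calculation shows $Rv = -v + w$ and $Rw = -v$, both of which are in $\L = \Z v + \Z w$. Since $R$ is linear, this gives $R\L \subseteq \L$, and equality then follows because $R$ is invertible (and $R^{-1} = R^2$ also preserves $\L$ by the same argument).

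Next I would verify (b). Using $p = (v+w)/3$ and linearity,
\[
p - Rp = \tfrac{1}{3}(v + w) - \tfrac{1}{3}(Rv + Rw) = \tfrac{1}{3}\bigl((v+w) - (-v+w) - (-v)\bigr) = v \in \L.
\]
Combining (a) and (b), suppose $\lambda := p + q \in \L$. Then
\[
p + Rq = p + R(\lambda - p) = (p - Rp) + R\lambda = v + R\lambda \in \L,
\]
because $v \in \L$ and $R\lambda \in R\L = \L$. Applying the same argument with $q$ replaced by $Rq$ yields $p + R^2 q \in \L$, completing the proof.

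There is no serious obstacle here: the lemma is essentially the observation that $p$ is a center of $3$-fold rotational symmetry of $\L$, and the only thing to check is that the two generating facts above hold in the given normalization. The slight care required is tracking the affine (rather than linear) nature of the rotation around $p$, which is handled by the identity $p + R(\lambda - p) = R\lambda + (p - Rp)$.
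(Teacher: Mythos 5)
Your proof is correct and rests on the same core computation as the paper's ($Rv = -v+w$, $Rw = -v$), merely repackaged as ``$R\L = \L$ together with $p - Rp = v \in \L$'' instead of directly expanding $p+Rq$ in the basis; the verification $p - Rp = v$ checks out. The only thing the paper's more explicit route buys is the concrete coordinate formulas $\lambda' = (1-k-l)v + kw$ and $\lambda'' = lv + (1-k-l)w$, which are reused later in the proof of Theorem~\ref{thm_distance}, but as a proof of the lemma itself your argument is complete.
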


\begin{figure}[ht]
	\includegraphics[width=.9\textwidth]{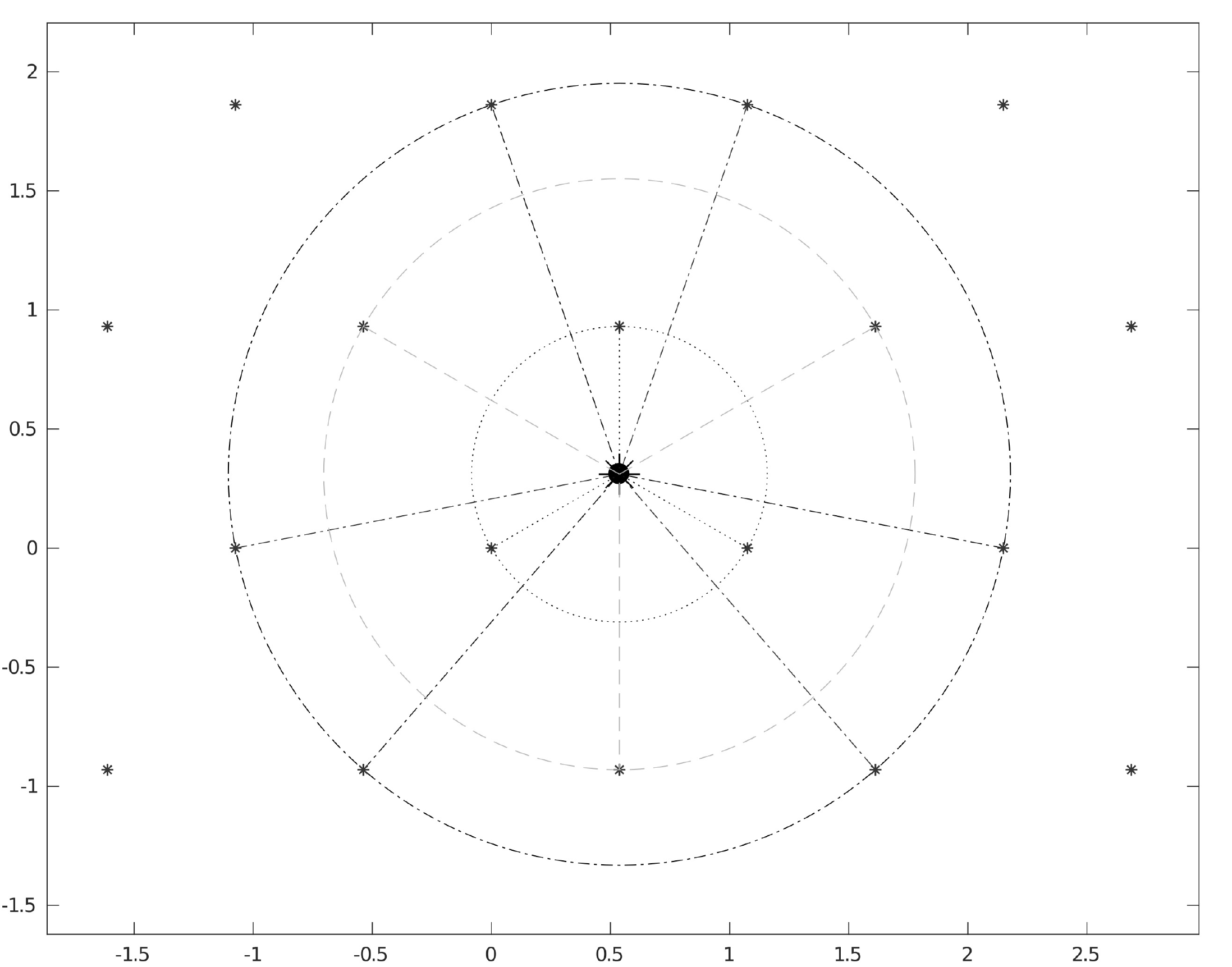}
	\caption{Point-triples of the hexagonal lattice on circle lines at different distances from the deep hole $p$. In general, there is more than one point-triple having a certain distance from $p$.}\label{fig_triples}
\end{figure}

\begin{proof} The argument is purely computational. If $p+q \in \Lambda$, then 
\begin{equation}
	p + q = k v + l w \qquad \mbox{for some}~k,l \in \mathbb{Z}
\end{equation}
and therefore
\begin{equation}
	q = \left( k - \frac{1}{3}\right)v + \left( l - \frac{1}{3}\right) w.
\end{equation}
Moreover, we have
\begin{equation}
	Rv = -v + w, \quad Rw = -v, \quad R^2v = -w, \quad R^2 w = v-w.
\end{equation}
Therefore, we have that
\begin{align*}
	p + Rq &= p +  \left( k - \frac{1}{3}\right)Rv + \left( l - \frac{1}{3}\right)Rw \\
&= (1 - k - l)v + kw \in \Lambda
\end{align*}
as well as
\begin{align*}
 p + R^2q &= p +  \left( k - \frac{1}{3}\right)R^2v + \left( l - \frac{1}{3}\right)R^2w \\
&= lv + (1 - k - l)w \in \Lambda.
\end{align*}
\end{proof}

\subsection{Proof of Theorem \ref{thm_distance}}\label{sec_proof_dist}

We write the basis vectors of a lattice $\Gamma$ with density 1 as
\begin{equation}
	v_1(x,y) = y^{-1/2} \left(1, 0\right) \qquad \mbox{and} \qquad w_1(x,y) = y^{-1/2} \left(x, y\right)
\end{equation}
and note that the hexagonal lattice $\L$ corresponds to the parameter choice $(x,y) = (1/2, \sqrt{3}/2)$ and we write $v$ and $w$ for the corresponding generating vectors.
Recall that
\begin{equation}
	p= \left(\frac{1}{3^{1/4} \sqrt{2}}, \frac{1}{3^{3/4} \sqrt{2}}\right)
\end{equation}
is the deep hole of $\L$.
Suppose now that $\lambda$ is an arbitrary point in $\L$
\begin{equation}
	\lambda = k v + l w \qquad k,l \in \Z .
\end{equation}
We consider its two associated points, obtained by rotation of $2\pi/3$ and $4\pi/3$ around the deep hole $p$, that are given by
\begin{equation}
	\lambda' = p +
	\begin{pmatrix}
		- \frac{1}{2} & -\frac{\sqrt{3}}{2} \\
		\frac{\sqrt{3}}{2} & - \frac{1}{2}
	\end{pmatrix}
 (\lambda-p) \qquad \mbox{and} \qquad
	\lambda'' = p +
	\begin{pmatrix}
		- \frac{1}{2} & \frac{\sqrt{3}}{2} \\
		-\frac{\sqrt{3}}{2} & - \frac{1}{2}
	\end{pmatrix}
	 (\lambda-p).
\end{equation}
The Lemma implies that $\lambda', \lambda'' \in \Lambda$ and yields the representations
\begin{equation}
	\lambda' = (1 - k - l)v + k w \qquad \textnormal{ and } \qquad  \lambda'' = l v + (1 - k - l) w, \qquad k,l \in \Z.
\end{equation}
So far everything is static and carried out for the hexagonal lattice $\Lambda$. We will now investigate the behavior of the triple $(\lambda, \lambda', \lambda'')$ under a perturbation of the lattice, resulting in the triple $(\gamma, \gamma', \gamma'')$ with
\begin{equation}
	\gamma = k v_1 + w_1, \qquad \gamma' = (1-k-l)v_1 + k w_1, \qquad \gamma'' = lv_1 + (1-k-l)w_1.
\end{equation}

We will now first establish the case of squared distances, i.e.~the case \eqref{eq_dist3} for $\phi(r) = r^2$, since it is more computationally tractable and illustrates the main idea very clearly. The case of linear distances \eqref{eq_dist2} is more or less identically but requires more computation, the case of \ convex functions \eqref{eq_dist3} follows from that.
\begin{proof}[Proof for squared distances.]
We will show that, in total, the squared distance of the triple $(\lambda, \lambda', \lambda'')$ to $p$ (which is fixed) increases under a perturbation. For a lattice $\Gamma$, generated by $v_1(x,y)$ and $w_1(x,y)$, we define the function
\begin{equation}
	f (x,y) = \|\gamma - p \|^2 + \|\gamma' - p\|^2 + \|\gamma'' - p\|^2.
\end{equation}
The function can explicitly be written as\begin{align}
	f(x,y) & = \frac{2 \left( \left(x-\tfrac{1}{2}\right)^2+y^2+\tfrac{3}{4}\right)}{y}
	\left(
		\left(k-\tfrac{1}{3}\right)^2 + \left(k-\tfrac{1}{3}\right)\left(l-\tfrac{1}{3}\right) + \left(l-\tfrac{1}{3}\right)^2 - \tfrac{1}{3}
	\right)\\
	& + \frac{1}{3 y}
	\left(
		3 x^2 -  3^{3/4} \sqrt{2} \, x \sqrt{y} + 3 y^2 - 3^{1/4} \sqrt{2} \, y^{3/2} + 2 \sqrt{3} \, y - 3^{3/4} \sqrt{2} \, \sqrt{y}+3
	\right).
\end{align}
For the partial derivatives we get
\begin{align}
	\partial_x f(x,y) & = \frac{2\left(2x-1\right)}{y}
	\left(
		\left(k-\tfrac{1}{3}\right)^2 + \left(k-\tfrac{1}{3}\right)\left(l-\tfrac{1}{3}\right) + \left(l-\tfrac{1}{3}\right)^2 - \tfrac{1}{3}
	\right)\\
	& + \frac{1}{3 y}
	\left(
		6 x - 3^{3/4} \sqrt{2} \, \sqrt{y}
	\right)
\end{align}
and
\begin{align}
	\partial_y f(x,y) & = \frac{2 \left(y^2-\tfrac{3}{4}-\left(x-\tfrac{1}{2}\right)\right)}{y^2}
	\left(
		\left(k-\tfrac{1}{3}\right)^2 + \left(k-\tfrac{1}{3}\right)\left(l-\tfrac{1}{3}\right) + \left(l-\tfrac{1}{3}\right)^2 - \tfrac{1}{3}
	\right)\\
	& + \frac{1}{6 y^2}
	\left(
		-6 x^2 + 3^{3/4} \sqrt{2} \, x \sqrt{y} + 6 y^2 - 3^{1/4} \sqrt{2} \, y^{3/2} + 3^{3/4} \sqrt{2} \, \sqrt{y}-6
   \right).
\end{align}
It is not hard to check that
\begin{equation}
	\partial_x f \big|_{x=1/2, y=\sqrt{3}/2} = 0
	\qquad \textnormal{ and } \qquad
	\partial_y f \big|_{x=1/2, y=\sqrt{3}/2} = 0
\end{equation}
independently of $k, l$. This shows that $\nabla f =0$ in the hexagonal case; $\L$ is a critical point. As a next step we compute the Hessian of $f(x,y)$ at $(x,y) = (1/2, \sqrt{3}/2)$;
\begin{equation}
	H(k,l) = D^2 f\big|_{x=\frac{1}{2}, y=\frac{\sqrt{3}}{2}} = \begin{pmatrix} h_1(k, l) & h_2(k, l) \\ h_2(k, l) & h_3(k, l) \end{pmatrix},
\end{equation}
where $h_1, h_2, h_3$ are rational functions in $k, l$. The results, after simplification, are
\begin{align}
	h_1(k,l) & =
	\frac{		
		8 \left(
			\left(k-\tfrac{1}{3}\right)^2 + \left(k-\tfrac{1}{3}\right)\left(l-\tfrac{1}{3}\right) + \left(l-\tfrac{1}{3}\right)^2 - \tfrac{1}{3}
		\right)
	+4}
	{\sqrt{3}},\\
	h_2(k,l) & = - \frac{2}{3},\\
h_3(k,l) &= h_1(k,l).
\end{align}
We are interested in the behavior of the smaller of the two eigenvalues of $H(k, l)$, which we denote by $\lambda_{\min}(k,l)$. Our goal is to show that it is positive and to deduce a lower bound for it. The characteristic polynomial of the matrix
\begin{equation}
	H(k,l) = \left(
	\begin{array}{c c}
		h_1(k,l) & -\tfrac{2}{3}\\
		-\tfrac{2}{3} & h_1(k,l)
	\end{array}
	\right)
\end{equation}
simply is
\begin{equation}
	P(\lambda) = (h_1(k,l) - \lambda)^2 - \frac{4}{9}.
\end{equation}
From this equation we actually see that
\begin{equation}
	\lambda_{\min}(k,l) = h_1(k,l)-\frac{2}{3}
	\qquad \textnormal{ and } \qquad
	\lambda_{\max}(k,l) = h_1(k,l)+\frac{2}{3}.
\end{equation}
We note that
\begin{equation}
	\left(k-\tfrac{1}{3}\right)^2 + \left(k-\tfrac{1}{3}\right)\left(l-\tfrac{1}{3}\right) + \left(l-\tfrac{1}{3}\right)^2 =
	\left(k-\tfrac{1}{3}, \, l-\tfrac{1}{3}\right)
	\left(
		\begin{array}{c c}
			1 & \frac{1}{2}\\
			\tfrac{1}{2} & 1
		\end{array}
	\right)
	\left(
		\begin{array}{c c}
			k-\tfrac{1}{3}\\
			l-\tfrac{1}{3}
		\end{array}
	\right)
\end{equation}
is a positive definite quadratic form. It follows that
\begin{equation}
	\left(k-\tfrac{1}{3}\right)^2 + \left(k-\tfrac{1}{3}\right)\left(l-\tfrac{1}{3}\right) + \left(l-\tfrac{1}{3}\right)^2 \geq 0,
\end{equation}
with equality only if $(k, l) = (\tfrac{1}{3}, \tfrac{1}{3})$. Hence, we conclude that
\begin{equation}
	h_1(k,l) > \frac{-\tfrac{8}{3} + 4}{\sqrt{3}} = \frac{4}{3 \sqrt{3}}, \qquad \forall (k,l) \in \Z^2.
\end{equation}
Consequently,
\begin{equation}
	\lambda_{\min}(k,l) > \frac{4 - 2 \sqrt{3}}{3 \sqrt{3}} \approx 0.103134 \ldots \; .
\end{equation}

It remains to study the asymptotic behavior of $\lambda_{\min}$. We observe that $h_1(k,l)$ is a positive-definite quadratic form and the off-diagonal terms
are merely a small perturbation. This shows that
\begin{equation}
	\lambda_{\min}(r) \gtrsim  r^2 \qquad \textnormal{ or } \qquad \lambda_{\min}(k,l) \gtrsim k^2+l^2.
\end{equation}
This implies
\begin{equation}
	\sum_{\mu \in C_r} \| p - \mu\|^2 -  \sum_{\lambda \in A_r} \| p - \lambda\|^2 \gtrsim r^2 \, |A_r| \, d(\Lambda, \Gamma)^2.
\end{equation}
\end{proof}
For practical considerations we remark that an explicit computation shows that
\begin{equation}
	\lambda_{\min}(k,l) \geq \frac{4}{\sqrt{3}} - \frac{2}{3} \approx 1.64273 \ldots
\end{equation}
with equality if and only if $(k,l) \in \{(0,0),(1,0),(0,1) \}$. For any norm on the set of lattices of density 1, this allows to determine the exact constant in the result.\\

We also remark that the quadratic form $k^2 + k l + l^2$ is naturally assigned to the hexagonal lattice (see e.g.~\cite[Chap.~2, $\S$ 2.2.]{con}). Geometrically, it describes the ellipse which arises from the circle under the linear transformation which canonically maps the hexagonal lattice to the square lattice, i.e.~the vectors $v$ and $w$ are mapped to the Euclidean unit vectors in $\R^2$. The point $(1/3, 1/3)$ corresponds to the deep hole in the new coordinates and the points $(0,0),(1,0),(0,1)$ are the closest lattice points to the deep hole in these coordinates. This special quadratic form also plays an important role in the theory of cubic theta functions \cite{BorBor91}.

\begin{proof}[Proof of \eqref{eq_dist2}] For the proof of \eqref{eq_dist2} we note that the strategy is the same as in the proof above. However, the computations are a bit more complicated due to the square roots. 
With the notation from above, for a lattice $\Gamma$ we define the function
\begin{equation}
	f (x,y) = \| \gamma - p \| + \| \gamma' - p \| + \| \gamma'' - p\|.
\end{equation}
The function has the explicit form
\begin{align}
	f(x,y) & = \frac{1}{\sqrt{y}}
	\left(
	\sqrt{
	\left(k + l x - \frac{\sqrt{y}}{3^{1/4} \sqrt{2}}\right)^2 + \left(l y - \frac{\sqrt{y}}{3^{3/4} \sqrt{2}}\right)^2
	}
	\right.
	\\
	& + \sqrt{
		\left(1-k-l + k x - \frac{\sqrt{y}}{3^{1/4} \sqrt{2}}\right)^2 + \left(k y - \frac{\sqrt{y}}{3^{3/4} \sqrt{2}}\right)^2
	}
	\\
	& + 
	\left.
	\sqrt{
   	\left((1-k-l) x + l - \frac{\sqrt{y}}{3^{1/4} \sqrt{2}}\right)^2 + \left((1-k-l) y -\frac{\sqrt{y}}{3^{3/4} \sqrt{2}}\right)^2
   	}
   	\right)\\
   	& = \frac{1}{\sqrt{y}} \left( f_1(x,y) + f_2(x,y) + f_3(x,y) \right).
\end{align}
The partial derivatives are not particularly nice, but simplify to
\begin{align}\label{eq_derivative_x}
	\partial_x f(x,y) & =
	\frac{
	l \left(k + l x - \frac{\sqrt{y}}{3^{1/4} \sqrt{2}}\right)
	}
	{
	\sqrt{y} \, f_1(x,y)
	}\\
	& +
	\frac{
	k \left(1-k-l + k x - \frac{\sqrt{y}}{\sqrt{2} \sqrt[4]{3}}\right)
	}
	{
	\sqrt{y} \, f_2(x,y)
	}\\
	& + 
	\frac{
	(1-k-l) \left((1-k-l) x + l - \frac{\sqrt{y}}{\sqrt{2} \sqrt[4]{3}}\right)
	}
	{
	\sqrt{y} \, f_3(x,y)
	}
\end{align}
and
\begin{align}\label{eq_derivative_y}
	\partial_y f(x,y) & =
	\frac{
	2 \left(
		-\frac{k}{2 y}-\frac{l x}{2 y}
	\right)
	\left(
		k + l x - \frac{\sqrt{y}}{3^{1/4} \sqrt{2}}
	\right)
	+ l
	\left(
		l y - \frac{\sqrt{y}}{3^{3/4} \sqrt{2}}
	\right)
	}
	{
	2 \, f_1(x,y)
	}\\
	& +
	\frac{
	2 \left(
		-\frac{1-k-l}{2 y}-\frac{k x}{2 y}
	\right)
	\left(
		1-k-l + k x - \frac{\sqrt{y}}{3^{1/4} \sqrt{2}}
	\right)
	+ k
	\left(k y - \frac{\sqrt{y}}{3^{3/4} \sqrt{2}}\right)
	}
	{
	2 \, f_2(x,y)
	}\\
	& +
	\frac{
	2 \left(
		-\frac{(1-k-l) x}{2 y} - \frac{l}{2 y}
	\right)
	\left(
		(1-k-l) x + l \sqrt{y} - \frac{\sqrt{y}}{3^{1/4} \sqrt{2}}
	\right)
	}
	{
	2 \, f_3(x,y)
	}\\
	& +
	\frac{(1-k-l)
	\left(
		(1-k-l) y-\frac{\sqrt{y}}{3^{3/4} \sqrt{2}}
	\right)
	}
	{
	2 \, f_3(x,y)
	}.
\end{align}
Next, we note that, due to the the rotation symmetry of $\L$ around $p$,
\begin{equation}
	f_1 \left( \tfrac{1}{2}, \tfrac{\sqrt{3}}{2} \right) =
	f_2 \left( \tfrac{1}{2}, \tfrac{\sqrt{3}}{2} \right) =
	f_3 \left( \tfrac{1}{2}, \tfrac{\sqrt{3}}{2} \right).
\end{equation}
Hence, when evaluating $\partial_x f$ and $\partial_y f$ at $(1/2, \sqrt{3}/2)$, the denominators of all summands are the same. So, we may simply sum the numerators in \eqref{eq_derivative_x} and \eqref{eq_derivative_y}, which then yields $\nabla f = 0$ in the hexagonal case. The Hessian of $f$ at $(1/2, \sqrt{3}/2)$ is given by
\begin{equation}
	H(k,l) = D^2 f(x,y)\big|_{x=\frac{1}{2}, y=\frac{\sqrt{3}}{2}} = \begin{pmatrix} h_1(k, l) & h_2(k, l) \\ h_2(k, l) & h_3(k, l) \end{pmatrix},
\end{equation}
where $h_1, h_2, h_3$ are rational functions of $k,l$, where the numerator is of degree 4 and the denominator is of degree 3. We note that we only need to understand the behavior of this matrix for $k, l \in \mathbb{Z}$ (by construction, there is a singularity at $k=l=1/3$ but these values are far away from integers just as the circumcenter is far away from lattice points). A simple computation shows that
\begin{equation}
	\lambda_{\min}(H(k,l)) \geq \frac{9-\sqrt{21}}{2^{3/2} \, 3^{3/4}} \approx 0.685146 \ldots \; ,  \qquad |k| , |l| \leq 100.
\end{equation}
As for the asymptotic bounds we note that, switching to polar coordinates, an easy computation (since they are rational functions) shows that
\begin{align}
	\lim_{r \rightarrow \infty}{ \frac{1}{r} \, h_1(r \cos{t}, r \sin{t}) } &= \frac{3^{3/4}}{2}\sqrt{2 + \sin{2t}} \\
	\lim_{r \rightarrow \infty}{  \, h_2(r \cos{t}, r \sin{t}) } &= \frac{3^{1/4}(3 \cos{t} + \cos{(3t)} + 4 \sin{(3t)}}{4(2 + \sin{(2t)} )^{3/2} }\\
	\lim_{r \rightarrow \infty}{ \frac{1}{r} \, h_3(r \cos{t}, r \sin{t}) } &= \frac{3^{3/4}}{2}\sqrt{2 + \sin{2t}}.
\end{align}
This shows that we may think of $H(k,l)$ as asymptotically being an identity matrix (with size proportional to the distance, the constant depending on the angle in polar coordinates) plus a small perturbation. This, combined with standard error estimates for rational functions, shows that
\begin{align}
	\lambda_{\min}(H(k,l)) &\geq \left( \min_{0 \leq t \leq 2\pi} \frac{3^{3/4}}{2}\sqrt{2 + \sin{2t}}\right) \sqrt{k^2 + l^2} + o(\sqrt{k^2 + l^2}) \\
	& = \frac{3^{3/4}}{2}\sqrt{k^2 + l^2} + o(\sqrt{k^2 + l^2}) \gtrsim \sqrt{k^2 + l^2}.
\end{align}
\end{proof}

\begin{proof}[Proof of \eqref{eq_dist3}]
	It remains to study the general case
	\begin{equation}
		\sum_{\mu \in C_r} \phi(\| p - \mu\|) -  \sum_{\lambda \in A_r} \phi(\| p - \lambda\|).
	\end{equation}
	For $\lambda \in A_r$ and its corresponding point $\mu \in C_r$, we set
	\begin{equation}
		\| p - \mu\| = \|p - \lambda \| + \varepsilon_{\mu} = r + \varepsilon_{\mu}, \qquad \varepsilon_\mu \in \R.
	\end{equation}
	A Taylor expansion of $\sum_{\gamma \in C_r} \phi(r+ \varepsilon_\mu)$ around $\varepsilon_\mu= 0$ shows that
	\begin{equation}
		\sum_{\mu \in C_r} \phi(\| p - \mu\|) -  \sum_{\lambda \in A_r} \phi(\| p - \lambda\|) = \phi'(r) \sum_{\mu \in C_r} {\varepsilon_{\mu}} + \frac{\phi''(r)}{2} \sum_{\mu \in C_r}{\varepsilon_{\mu}^2} + o(d(\Lambda, \Gamma)^2),
	\end{equation}
	where the error term is allowed to depend on $r$ and $A_r$. As we just established,
	\begin{equation}
		\sum_{\mu \in C_r} {\varepsilon_{\mu}}  \gtrsim r \, |A_r| \, d(\L, \Gamma)^2,
	\end{equation}
	which shows the desired statement.
\end{proof}

We note that the estimate $|\varepsilon_{\mu}| \lesssim r \, d(\L, \Gamma)$ implies that we can extend the argument to functions $\phi$ that are almost convex in the sense that they satisfy
\begin{equation}
	c \, \phi'(r) + r \, \phi''(r) \geq 0,
\end{equation}
where $c$ is a universal constant. A very similar computation yields Theorem 2.

\subsection{Proof of Theorem \ref{thm_max}}
\begin{proof}[Proof of Theorem \ref{thm_max}]
We analyze the contribution coming from triples of points at a given distance from $p$ by splitting the total contributions into
\begin{equation}
	\sum_{\lambda \in \Lambda}{ f(\|p - \lambda\|)} = \sum_{r } \sum_{\lambda \in A_r}{ f(\|p - \lambda\|)},
\end{equation}
where the outer sum runs over $r$ with the property that $A_r$ is not empty. Then we only analyze the behavior of a single sum (consisting of one or more triples). As before, we write
\begin{equation}
	\| \mu - q \| = \| \lambda - p\| + \varepsilon_\mu = r + \varepsilon_\mu, \qquad  \varepsilon_\mu \in \R.
\end{equation}
Now, we employ again the estimate from Theorem \ref{thm_distance};
\begin{equation}\label{eq_estimate_Ar}
	\sum_{\lambda \in A_r} \varepsilon_\mu \gtrsim r \, |A_r| \, d(\Lambda, \Gamma)^2.
\end{equation}
A Taylor expansion of $\sum_{\lambda \in A_r} f(r+\varepsilon_\mu)$ around $\varepsilon_\mu = 0$ shows that
\begin{align}
	\sum_{\mu \in C_r}{ f(\| p - \mu\|)} - \sum_{\lambda \in A_r}{ f(\| p - \lambda \|)} &=  f'(r)\sum_{\lambda \in A_r}{\varepsilon_\mu}\\
	 &+ f''(r) \sum_{\lambda \in A_r}{\frac{\varepsilon_\mu^2}{2}}  + o(d(\Lambda, \Gamma)^2) &.
\end{align}
From the estimate we have from Theorem \ref{thm_distance} and $f$ being monotonically decreasing, we see that
\begin{equation}
	f'(r) \sum_{\lambda \in A_r}{\varepsilon_\mu} \lesssim  f'(r) \, r \, | A_r| \, d(\Lambda, \Gamma)^2
\end{equation}
while the estimate $|\varepsilon_\mu| \lesssim r \, d(\Lambda, \Gamma)$ yields
\begin{equation}
	f''(r) \sum_{\lambda \in A_r}{\frac{\varepsilon_\mu^2}{2}} \lesssim f''(r) \sum_{\lambda \in A_r}{ r^2 \, d(\Lambda, \Gamma)^2} = f''(r) \, r^2 \, | A_r| \, d(\Lambda, \Gamma)^2.
\end{equation}
This shows that if, for some universal constant $c$ depending on all other universal constants, 
\begin{equation}
	r\, f''(r) < - c \, f'(r),
\end{equation}
then the negative contributions dominate. This shows 
\begin{equation}
	\sum_{\mu \in C_r}{ f(\| p - \mu\|)} - \sum_{\lambda \in A_r}{ f(\| p - \lambda \|)} \lesssim f'(r) r |A_r| d(\L, \Gamma)^2
\end{equation}
for all perturbations where $d(\L, \Gamma)$ is sufficiently small depending on $r$. Since $f$ is compactly supported, only a finite number of $r$ are relevant and this establishes the desired result.
\end{proof}


\begin{thebibliography}{em}

\bibitem{Bae94}
A.~{Baernstein II}.
\newblock {Landau's constant, and extremal problems involving discrete subsets of $\mathbb{C}$}.
\newblock In Viktor~P. Havin and Nikolai~K. Nikolski, editors, {\em {Linear and Complex Analyis Problem Book 3, Part 2}}, number 1574 in {Lecture Notes in Mathematics}, chapter 18, pages 404--407. Springer, (1994).

\bibitem{Bae97}
A.~{Baernstein II}.
\newblock {A minimum problem for heat kernels of flat tori}.
\newblock In {\em {Extremal {R}iemann surfaces ({S}an {F}rancisco, {CA}, 1995)}}, volume 201 of {\em {Contemporary Mathematics}}, pages 227--243. American Mathematical Society, Providence, RI, (1997).

\bibitem{BaeVin98}
A.~{Baernstein II} and J.P.~Vinson.
\newblock {Local Minimality Results Related to the Bloch and Landau Constants}.
\newblock In Peter Duren, Juha Heinonen, Brad Osgood, and Bruce Palka, editors,
{\em {Quasiconformal Mappings and Analysis: A Collection of Papers Honoring F.W. Gehring}}, pages 55--89. Springer New York, New York, NY, (1998).
  
\bibitem{bet}
L.~Betermin.
\newblock{Two-dimensional theta functions and crystallization among Bravais lattices}.
\newblock{\em SIAM Journal on Mathematical Analysis}, 48(5):3236--3269, (2016).

\bibitem{bet1}
L.~Betermin and M.~Petrache.
\newblock{Dimension reduction techniques for the minimization of theta functions on lattices}.
\newblock{\em Journal of Mathematical Physics} 58, 071902, (2017).

\bibitem{bet2}
L.~Betermin.
\newblock{Minimizing lattice structures for Morse potential energy in two and three dimensions}. arXiv:1901.08957, preprint (2019).

\bibitem{bl}
X.~Blanc and M.~Lewin.
\newblock{The crystallization conjecture: a review}.
\newblock{EMS Surv.~Math.~Sci.}, 2(2):225--306, (2015).

\bibitem{BorBor91}
J.M.~Borwein and P.B.~Borwein.
\newblock {A Cubic Counterpart of Jacobi's Identity and the AGM}.
\newblock {\em Transactions of the American Mathematical Society}, 332(2):691--701, (1991).

\bibitem{cohn}
H.~Cohn and A.~Kumar.
\newblock{Universally Optimal Distribution of Points on Spheres}.
\newblock{\em Journal of the American Mathematical Society}, 20(1):99--148, (2007).

\bibitem{Via24}
H.~Cohn, A.~Kumar, S.D.~Miller, D.~Radchenko, and M.S.~Viazovska.
\newblock The sphere packing problem in dimension 24.
\newblock {\em Annals of Mathematics}, 187(3):1035--1068, (2017).

\bibitem{con}
J.H.~Conway and N.J.A.~Sloane.
\newblock{\em Sphere packings, lattices and groups}.
\newblock{Third ed. Grundlehren der Mathematischen Wissenschaften [Fundamental Principles of Mathematical Sciences], 290}. Springer-Verlag, New York, (1999).

\bibitem{phd} M.~Faulhuber.
\newblock{\em Extremal Bounds of Gaussian Gabor Frames and Properties of Jacobi's Theta Functions}.
\newblock{Doctoral thesis}, University of Vienna (2016).

\bibitem{markus1}
M.~Faulhuber and S.~Steinerberger.
\newblock{Optimal Gabor frame bounds for separable lattices and estimates for Jacobi theta functions}.
\newblock{\em Journal of Mathematical Analysis and Applications}, 445(1):407--422 (2017).

\bibitem{markus2}
M.~Faulhuber.
\newblock{Minimal frame operator norms via minimal theta functions}.
\newblock{\em Journal of Fourier Analysis and Applications}, 24(2):545--559, (2018).

\bibitem{hales}
T.~Hales.
\newblock{A proof of the Kepler conjecture}.
\newblock{\em Annals of Mathematics}, second series, 162(3):1065--1185, (2005).

\bibitem{lefloch}
B.~Le Floch, M.~Alard and C.~Berrou.
\newblock{Coded orthogonal frequency division multiplex [TV broadcasting]}.
\newblock{\em Proceedings of the IEEE}, 83(6):982--996, (1995).

\bibitem{mont}
H.~Montgomery.
\newblock{Minimal theta functions}.
\newblock{\em Glasgow Mathematical Journal}, 30(1):75--85, (1988).

\bibitem{Osgood88}
B.~Osgood, R.~Phillips, and P.~Sarnak.
\newblock Extremals of determinants of laplacians.
\newblock {\em Journal of Functional Analysis}, 80(1):148--211, (1988).

\bibitem{radin}
C.~Radin.
\newblock{The ground state for soft disks}.
\newblock{\em Journal of Statistical Physics}, 26(2):365--373, (1981).

\bibitem{rogers}
C.A.~Rogers.
\newblock{\em Packing and covering}.
\newblock{Cambridge Tracts in Mathematics and Mathematical Physics}, number 54, Cambridge University Press, New York (1964).

\bibitem{Serre_1973}
J.-P.~Serre.
\newblock {\em A Course in Arithmetic}, volume~7 of {\em Graduate Texts in Mathematics}. Springer, (1973).

\bibitem{steini}
S.~Steinerberger,
\newblock {A Geometric Uncertainty Principle with Applications to Pleijel's estimate}.
\newblock {\em Annales Henri Poincare}, 15: 2299 - 2319, (2014).

\bibitem{strohmer}
T.~Strohmer and S.~Beaver.
\newblock {Optimal OFDM design for time-frequency dispersive channels}.
\newblock {\em Communications, IEEE Transactions}, 51(7):1111--1122, (2003).

\bibitem{theil}
F.~Theil.
\newblock{A proof of crystallization in two dimensions}.
\newblock{\em Communications in Mathematical Physics}, 262(1):209--236, (2006). 

\bibitem{Via8}
M.S.~Viazovska.
\newblock The sphere packing problem in dimension 8.
\newblock {\em Annals of Mathematics}, 187(3):991--1015, (2017).


\end{thebibliography}
\end{document}